\documentclass[preprint,12pt]{elsarticle}
\usepackage{amsthm,amsmath,amssymb}
\usepackage[colorlinks=true,citecolor=black,linkcolor=black,urlcolor=blue]{hyperref}
\usepackage{graphicx}
\usepackage{float}
\usepackage{mathtools}
\usepackage{epstopdf}
\usepackage{epsfig}
\usepackage{subfigure}
\usepackage{multirow}

\theoremstyle{plain}
\newtheorem{theorem}{Theorem}
\newtheorem{lemma}[theorem]{Lemma}
\newtheorem{corollary}[theorem]{Corollary}
\newtheorem{proposition}[theorem]{Proposition}

\theoremstyle{definition}
\newtheorem{definition}[theorem]{Definition}

\newtheorem{problem}[theorem]{Problem}

\theoremstyle{remark}
\newtheorem{remark}[theorem]{Remark}

\title{Twist polynomial interpolation for binary delta-matroids}

\author{Zhao Zhao,
Qi Yan\footnote{Corresponding author.}\\
\small School of Mathematics and Statistics, Lanzhou University, PR China\\
\small{\tt Email: zhzhao2025@lzu.edu.cn; yanq@lzu.edu.cn}}
\date{}
\journal{arXiv}

\begin{document}
\begin{abstract}
Gross, Mansour and Tucker introduced the partial-dual polynomial of a ribbon graph and asked under what conditions such a polynomial is even-interpolating, odd-interpolating, or both.
In this paper, we provide an answer to this open problem.
Using the framework of delta-matroids, we prove that the twist polynomial of any binary delta-matroid is either an even polynomial, an odd polynomial, or both even-interpolating and odd-interpolating.
Applying this to ribbon graphs, we deduce that the partial-dual polynomial of any ribbon graph satisfies the same conclusion.
\end{abstract}

\begin{keyword} Delta-matroid\sep ribbon graph \sep twist polynomial\sep partial dual\sep interpolation
\vskip0.2cm

\end{keyword}
\maketitle

\section{Introduction}

Chmutov \cite{CSL2007} introduced the notion of partial duality for ribbon graphs. This construction was used to study signed Bollob\'as-Riordan polynomials \cite{BR2, BR} and knot polynomials \cite{CP2007, K1987}.  Gross, Mansour and Tucker \cite{GMT2020} introduced the partial-dual polynomial for an arbitrary ribbon graph.
\begin{definition}[\cite{GMT2020}]
    The partial-dual polynomial of any ribbon graph $G$ is the generating function
    \[
{^\partial\varepsilon_G(z)}=\sum_{A\subseteq E(G)}z^{\varepsilon(G^A)}
    \]
    that enumerates all partial duals of $G$ by Euler genus.
\end{definition}

Gross, Mansour and Tucker \cite{GMT2020} studied various basic properties of partial-dual  polynomials, including interpolation and log-concavity. They showed that for any orientable ribbon graph, the polynomial is even-interpolating, and posed the following open problem for the non-orientable case.

\begin{problem}[\cite{GMT2020}]\label{problem}
Under what interesting sets of sufficient conditions on a non-orientable ribbon graph is the partial-dual polynomial  even-interpolating, odd-interpolating, or both?
\end{problem}

Recently, partial-dual polynomials have been further investigated, for example in \cite{Chen2026, Cheng, CG2, DJY, QYXJ}. In this paper, we use the tool of delta-matroids to solve Problem~\ref{problem}. Delta-matroids, introduced by Bouchet \cite{GA1987}, generalize embedded graphs in the same way that matroids generalize graphs; see \cite{CH19, CH2019} for further background.  Yan and Jin \cite{YQ2022} extended the partial-dual polynomial to the twist polynomial of a delta-matroid.

\begin{definition}[\cite{YQ2022}]
    The twist polynomial of any set system \( D = (E, \mathcal{F}) \) is the generating function
    \[
    {^\partial w_D(z)}=\sum_{A\subseteq E}z^{w(D*A)}
    \]
    that enumerates all twists of $D$ by width.
\end{definition}

We first prove the following result for binary delta-matroids, which  determines the interpolating behaviour of their twist polynomials.

\begin{theorem}\label{theorem5}
For a binary delta-matroid $D=(E,\mathcal{F})$, the twist polynomial ${^\partial w_D(z)}$ is either an even polynomial, an odd polynomial, or both even-interpolating and odd-interpolating.
\end{theorem}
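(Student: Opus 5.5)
We begin by fixing conventions. For a set system $D=(E,\mathcal{F})$, the width is $w(D)=d_{\max}-d_{\min}$, the difference between the largest and smallest feasible set sizes. A polynomial is even-interpolating if its nonzero coefficients on even powers occupy a contiguous range of even exponents, and odd-interpolating is defined analogously for odd powers.

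For a binary delta-matroid we use its representation by a symmetric matrix over $GF(2)$. After a suitable twist, $D*X$ has a feasible set $\emptyset$, so $D*X=D(C)$ for some symmetric $C$. The feasible sets are then the $F\subseteq E$ with $C[F]$ nonsingular. Twists only permute the summands of ${^\partial w_D(z)}$, so we may assume $D=D(C)$.

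\textbf{Step 1: parity.} For any twist $D*A$, all feasible sets of $D$ have the same parity exactly when $D$ is even. Over $GF(2)$, $D(C)$ is even if and only if $C$ has zero diagonal, and evenness is preserved by twists. Hence in the even case every width $w(D*A)$ is even, and ${^\partial w_D(z)}$ is an even polynomial.

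In the odd case (some diagonal entry $C_{ee}=1$), we cannot hope for every exponent to be odd. Take $A=\emptyset$: if the width of $D$ is even, then an odd exponent must appear elsewhere.

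So the real content is the interpolation claim. We must show two things:
\begin{enumerate}
\item[(a)] In the odd case, either all widths are odd, or both parities occur.
\item[(b)] The set of even widths and the set of odd widths each form a contiguous progression with step $2$.
\end{enumerate}

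\textbf{Step 2: local moves.} We study how $w(D*A)$ changes when $A$ changes by one element $e$, that is, $w(D*A)$ versus $w(D*(A\triangle\{e\}))$.

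Write $d_{\max}(D*A)=|A|+\max_F\bigl(|F\setminus A|-|F\cap A|\bigr)$, and similarly for the minimum. Toggling one element changes each extremal quantity by at most $1$, so $|w(D*A)-w(D*(A\triangle e))|\le 2$.

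For a binary delta-matroid we want something sharper:
\begin{itemize}
\item If $e$ is a non-loop in the even part (diagonal entry $0$ after pivoting), a single toggle changes the width by $0$ or $\pm2$.
\item If the corresponding diagonal entry is $1$, the width changes by exactly $\pm1$, possibly together with $0$.
\end{itemize}
The tool here is a twist/loop-complementation identity for the extremal feasible sets. We pivot so that $A=\emptyset$ (the principal pivot transform $C*A$), and then analyze $d_{\min}$ and $d_{\max}$ of $D(C)*\{e\}$ using the rank of $C$ and of $C$ with row and column $e$ deleted.

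\textbf{Step 3: contiguity.} Starting from a twist achieving the minimum width, we walk through twists toward one achieving the maximum, adding one element at a time. This produces a path in the hypercube $2^E$ along which the width changes in steps of at most $2$.

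For even interpolation we restrict to moves that preserve parity, or change it twice in a row, and must show that no even value is skipped. That is, the even widths encountered along the walk form the full progression between the minimum and maximum even widths. The odd exponents are handled in the same way.

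\textbf{Main obstacle.} The crux is proving that a jump of size $2$ between two widths of the same parity cannot skip an intermediate value of that parity in the global set. The danger is that the extremal even widths lie on paths which only ever pass through odd widths in between, with the even values in the middle never realized.

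To rule this out, we plan to use binary representability. Specifically, we would show that for a symmetric $C$ with some $C_{ee}=1$, the values of $w$ on the two-element twists $\{e,f\}$ realize every intermediate value. We expect to derive this from the rank identity $r(C*\{e,f\})$ together with the fact that $d_{\max}-d_{\min}$ for $D(C)$ equals $\operatorname{rank}(C)$ minus the minimal size of a feasible set. The latter is $0$ here, since $\emptyset$ is feasible.

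Finally, in the odd case, a toggle at a vertex with diagonal entry $1$ necessarily changes the parity of the width. Hence both parities occur unless every width has the same parity, which forces the polynomial to be odd.
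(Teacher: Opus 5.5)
\textbf{There is a genuine gap: the theorem's substantive step is never proved.} Your reduction to $D=D(C)$ and your treatment of the case where $D$ is even are fine. The real content lies elsewhere. Since a single twist changes the width by at most $2$, the set $\mathcal{W}$ of twist widths has gaps of size at most one. The only bad configuration is one like $\mathcal{W}=\{1,2,3,5,6\}$, where both parities occur but one parity class has a gap. Ruling this out needs a global statement, namely Theorem~\ref{theorem3}: if $k,k+1\in\mathcal{W}$, then every $m$ with $k+2\le m\le w_M(D)$ lies in $\mathcal{W}$. Below the first consecutive pair, all widths then share a parity and step by $2$.

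You name this as the ``main obstacle'' but give no argument for it. The suggested route through two-element twists $D*\{e,f\}$ cannot work. Such twists move the width by at most $4$ from $w(D)$, so they cannot realize ``every intermediate value'' of a range of arbitrary length. The identity $w(D(C))=\operatorname{rank}(C)$ computes the width of one twist only.

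The idea you are missing is an extremal argument. Assume $k+2\notin\mathcal{W}$; then $k+3\in\mathcal{W}$. Pick $B$ of minimum size with $w(D*A*B)=k+3$.
\begin{enumerate}
\item Minimality and Table~\ref{tab:widths} force every $e\in B$ to have type $uu$ in $D'=D*A*B$.
\item Deleting $B^c$ preserves primal types (Lemma~\ref{lem:prim_delete}, which uses the exchange axiom). So $D'-B^c$ is binary with every element of primal type $u$, hence even by Lemma~\ref{Lemma4}.
\item Therefore, on the way back from $D'$ to $D*A$, each element twisted has primal and dual types in $\{p,u\}$. Each step changes the width by an even amount, contradicting the net change $-3$.
\end{enumerate}

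\textbf{The local facts you plan to use are false.} By Table~\ref{tab:widths}, $w(D*e)-w(D)$ is odd exactly when one of the primal and dual types of $e$ is $t$ and the other is not. It is not governed by the diagonal entry $C_{ee}$.
\begin{itemize}
\item Take $C=\begin{pmatrix}0&1\\1&1\end{pmatrix}$ on $\{e,f\}$. Then $\mathcal{F}=\{\emptyset,\{f\},\{e,f\}\}$, and $e$ is a non-loop with $C_{ee}=0$. Yet $w(D)=2$ and $w(D*e)=1$.
\item Take $C=I_2$. Then $\mathcal{F}=2^{\{1,2\}}$ and every twist equals $D$. Toggling an element with diagonal entry $1$ leaves the width unchanged.
\end{itemize}
The second example also refutes your claim (a) and your final paragraph. $D(I_2)$ is odd, yet ${^\partial w_D(z)}=4z^2$ is an even polynomial. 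This is consistent with the theorem, but it shows the case split should be on which parities occur in $\mathcal{W}$, not on whether $D$ is even. When only one parity occurs, the conclusion is immediate. When both occur, you need the result above.
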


Using the relation between ribbon graphs and delta-matroids, Theorem~\ref{theorem5} specializes to ribbon graphs and thereby answers Problem~\ref{problem}.

\begin{theorem}\label{main theorem}
For any ribbon graph $G$, the partial-dual polynomial ${^\partial\varepsilon_G(z)}$ is either an even polynomial, an odd polynomial, or both even-interpolating and odd-interpolating.
\end{theorem}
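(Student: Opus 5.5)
The plan is to derive Theorem~\ref{main theorem} from Theorem~\ref{theorem5} via the standard dictionary between ribbon graphs and delta-matroids. For a ribbon graph $G$, let $D(G)=(E(G),\mathcal{F})$ be the delta-matroid whose feasible sets are the edge sets of spanning quasi-trees of $G$. Two known facts are needed. First (Bouchet; Chun et al.~\cite{CH19}), $D(G)$ is a binary delta-matroid. Second (Chun et al.), partial duality corresponds to twisting, so $D(G^A)=D(G)*A$ for every $A\subseteq E(G)$. We also need the identity, valid for any ribbon graph $H$ (up to the harmless assumption of connectivity, which can be handled componentwise since both sides are additive over components), that the width of $D(H)$ equals the Euler genus, $w(D(H))=\varepsilon(H)$. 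This identity comes from the fact that feasible sets of $D(H)$ are the quasi-trees, whose sizes range from $v(H)-1$ (for spanning trees) up to $e(H)-f(H)+1$ (dually). Their difference is $e-v-f+2=\varepsilon(H)$ when $H$ is connected, and for general $H$ the $2k(H)$ correction matches on both sides.

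Combining these facts termwise gives
\[
{^\partial\varepsilon_G(z)}=\sum_{A\subseteq E(G)}z^{\varepsilon(G^A)}=\sum_{A\subseteq E(G)}z^{w(D(G^A))}=\sum_{A\subseteq E(G)}z^{w(D(G)*A)}={^\partial w_{D(G)}(z)}.
\]
Applying Theorem~\ref{theorem5} to the binary delta-matroid $D(G)$ then yields the conclusion directly.

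The only genuine point to check is the width--genus identity for disconnected ribbon graphs. Here I would note that partial duality preserves the number of components, so the componentwise argument goes through and no other obstacle remains. I expect this check to be the main (though mild) obstacle; everything else is a direct citation plus the chain of equalities above.
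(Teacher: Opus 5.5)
Your proposal is correct and is essentially the paper's argument: both rest on $D(G)$ being binary and on $D(G^A)=D(G)*A$ together with $\varepsilon(H)=w(D(H))$ (Lemma~\ref{lem 1}), which give ${^\partial\varepsilon_G(z)}={^\partial w_{D(G)}(z)}$. The paper packages this less directly, going through Corollary~\ref{cor:ribbon} and re-running the parity argument, whereas you apply Theorem~\ref{theorem5} at once; your extra discussion of disconnected graphs just re-derives the cited width--genus identity and is not needed.
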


\section{Preliminaries}

In this section, we recall several definitions related to delta-matroids and ribbon graphs, and refer the reader to \cite{CH19, CH2019, EM1, EM} for further details.

\subsection{Delta-matroids}

A \emph{set system} is a pair $D = (E, \mathcal{F})$, where $E$, also denoted by $E(D)$, is a finite set, called the \emph{ground set},
and $\mathcal{F}$, also denoted by $\mathcal{F}(D)$, is a collection of subsets of $E$, called \emph{feasible sets}.
$D$ is \emph{proper} if $\mathcal{F} \neq \emptyset$, and is \emph{normal} if $\emptyset\in\mathcal{F}$.
For sets $X$ and $Y$, their \emph{symmetric difference} is $$X \Delta Y = (X \cup Y) - (X \cap Y).$$
$D$ is \emph{even} if $|X \Delta Y|$ is even for all $X, Y \in \mathcal{F}(D)$; otherwise it is \emph{odd}.
Throughout the paper, we often omit set brackets for singletons: for example, we write $E - e$ instead of $E - \{e\}$, and $F \cup e$ instead of $F \cup \{e\}$.

Bouchet \cite{GA1987} introduced delta-matroids and the notion of a twist as follows.

\begin{definition}[\cite{GA1987}]
A \emph{delta-matroid} is a proper set system $D=(E, \mathcal{F})$ satisfying the \emph{Symmetric Exchange Axiom}:
for any $X, Y \in \mathcal{F}$ and any $u \in X \Delta Y$, there exists $v \in X \Delta Y$ (possibly $v = u$) such that
$X \Delta \{u, v\} \in \mathcal{F}$.
\end{definition}

The \emph{twist} of a set system $D=(E,\mathcal{F})$ with respect to a subset $A \subseteq E$, denoted by $D * A$,
is the set system $$(E, \{A \Delta X \mid X \in \mathcal{F}\}).$$
The \emph{dual} of $D$ is $D^{*} = D * E$.
Note that the twist of a
delta-matroid is also a delta-matroid \cite{GA1987}.

For a set system $D = (E, \mathcal{F})$, let $\mathcal{F}_{\max}(D)$ and $\mathcal{F}_{\min}(D)$ be the collections of
maximum and minimum cardinality feasible sets of $D$, respectively. Let $D_{\max} = (E, \mathcal{F}_{\max}(D))$ and $D_{\min} = (E, \mathcal{F}_{\min}(D))$.
Let $r(D_{\max})$ and $r(D_{\min})$ denote the sizes of the largest and smallest feasible sets of $D$, respectively.
The \emph{width} of $D$, denoted by $w(D)$, is defined by
\[
w(D) = r(D_{\max}) - r(D_{\min}).
\]
For all non-negative integers $i \leq w(D)$, let
\[
\mathcal{F}_{\max - i}(D) = \{F \in \mathcal{F} \mid |F| = r(D_{\max}) - i\}
\]
and
\[
\mathcal{F}_{\min + i}(D) = \{F \in \mathcal{F} \mid |F| = r(D_{\min}) + i\}.
\]
The \emph{maximum twist width} $w_M(D)$ of a set system $D=(E,\mathcal{F})$ is defined by
\[
w_M(D) = \max \{ w(D * A) \mid A \subseteq E \}.
\]
We say that an integer $t$ is a \emph{twist width} of $D$ if there exists $A\subseteq E$ such that $w(D*A)=t$.

Let $D=(E, \mathcal{F})$ be a set system. An element $e \in E$ is a \emph{coloop} if $e \in F$ for every $F \in \mathcal{F}$,
and it is a \emph{loop} if $e \notin F$ for every $F \in \mathcal{F}$.
Let $e \in E$. The \emph{deletion} of $e$ from $D$, denoted $D - e$, is the set system $(E - e, \mathcal{F}')$ where
\[
\mathcal{F}' =
\begin{cases}
\{F \in \mathcal{F} \mid F \subseteq E - e\}, & \text{if $e$ is not a coloop}, \\[4pt]
\{F - e \mid F \in \mathcal{F}\}, & \text{if $e$ is a coloop}.
\end{cases}
\]
Bouchet \cite{BD1991} showed that the order of deletions is immaterial.
For a subset $A \subseteq E$, we define $D-A$ as the result of deleting every element of $A$ in any order.
If $D$ is a delta-matroid, then $D-A$ is also a delta-matroid (see \cite{BD1991}).

For a finite set $E$, let $C$ be a symmetric $|E| \times |E|$ matrix over $\mathrm{GF}(2)$ with rows and columns indexed by $E$.
For $A \subseteq E$, denote by $C[A]$ the principal submatrix of $C$ induced by $A$.
Define the set system $D(C) = (E, \mathcal{F})$ by
\[
\mathcal{F} = \{ A \subseteq E \mid C[A] \text{ is non-singular} \},
\]
where, by convention, $C[\emptyset]$ is non-singular. Bouchet \cite{AB4} showed that $D(C)$ is a normal delta-matroid.

A delta-matroid is \emph{binary} if it has a twist that is isomorphic to $D(C)$ for some symmetric matrix $C$ over $\mathrm{GF}(2)$. In particular, if $D=(E, \mathcal{F})$ is a normal binary delta-matroid, then there exists a unique symmetric $|E|\times|E|$ matrix $C$ over $\mathrm{GF}(2)$, whose rows and columns are labelled (in the same order) by the set $E$ such that $D=D(C)$. In fact, the matrix $C$
can be constructed  as follows \cite{BD1991, MO2019}:
\begin{description}
\item [(1)] Set $C_{vv}=1$ if and only if $\{v\}\in \mathcal{F}$. This determines the diagonal entries of $C$;
\item [(2)] Set $C_{uv}=1$ if and only if $\{u\}, \{v\}\in \mathcal{F}$ but $\{u, v\}\notin \mathcal{F}$, or $\{u, v\}\in \mathcal{F}$ but $\{u\}$ and $\{v\}$ are not both in $\mathcal{F}$. Then the feasible sets of size at most two determine the off-diagonal entries of $C$.
\end{description}

\begin{definition}[\cite{CH2019}]
Let $D=(E,\mathcal{F})$ be a set system and $e\in E$.
\begin{description}
    \item [(1)] $e$ is a \emph{ribbon loop} if $e$ is a loop in $D_{\min}$;
    \item [(2)] A ribbon loop $e$ is \emph{non-orientable} if it remains a ribbon loop in $D * e$, and \emph{orientable} otherwise.
\end{description}
\end{definition}

Let $D = (E, \mathcal{F})$ be a set system and $e \in E$. The \emph{primal type} of $e$ in $D$ is $p$, $u$, or $t$ according to whether $e$ is a non-ribbon loop, an orientable loop, or a non-orientable loop, respectively. The primal type of $e$ in the dual $D^{*}$ is called its \emph{dual type} in $D$. Together, the primal and dual types form the \emph{type} of $e$, written as a juxtaposed pair of letters (primal type first). For example, type $pu$ means primal type $p$ and dual type $u$.

\begin{proposition}[\cite{YQ2024}]\label{lemma6}
For a set system $D=(E,\mathcal{F})$ and $e\in E$, the following statements hold.
\begin{description}
\item[\textup{(1)}] The primal type of $e$ is $p$ in $D$ if and only if there exists $F \in \mathcal{F}_{\mathrm{min}}(D)$ such that $e \in F$.
\item[\textup{(2)}] The primal type of $e$ is $u$ in $D$ if and only if for every $F \in \mathcal{F}_{\mathrm{min}}(D) \cup \mathcal{F}_{\mathrm{min}+1}(D)$, $e \notin F$.
\item[\textup{(3)}] The primal type of $e$ is $t$ in $D$ if and only if for every $F \in \mathcal{F}_{\mathrm{min}}(D)$, $e \notin F$, and there exists $F_1 \in \mathcal{F}_{\mathrm{min}+1}(D)$ such that $e \in F_1$.

\item[\textup{(4)}] The dual type of $e$ is $p$ in $D$ if and only if there exists $F \in \mathcal{F}_{\mathrm{max}}(D)$ such that $e \notin F$.
\item[\textup{(5)}] The dual type of $e$ is $u$ in $D$ if and only if for every $F \in \mathcal{F}_{\mathrm{max}}(D) \cup \mathcal{F}_{\mathrm{max}-1}(D)$, $e \in F$.
\item[\textup{(6)}] The dual type of $e$ is $t$ in $D$ if and only if for every $F \in \mathcal{F}_{\mathrm{max}}(D)$, $e \in F$, and there exists $F_1 \in \mathcal{F}_{\mathrm{max}-1}(D)$ such that $e \notin F_1$.
\end{description}
\end{proposition}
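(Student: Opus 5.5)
The proof is a direct unwinding of the definitions. The only real work is to identify the minimum feasible sets of the twist $D*e$ when $e$ is a ribbon loop. Parts (4)--(6) then follow from (1)--(3) applied to the dual $D^*=D*E$. Throughout, the set system $D$ is assumed proper, so that $\mathcal{F}_{\min}(D)$ is nonempty; this is needed in step 4 below. The statement does not include this hypothesis, so it has to be read into it.

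\emph{Part (1).} By definition, $e$ has primal type $p$ exactly when $e$ is not a ribbon loop. That means $e$ is not a loop of $D_{\min}$, i.e.\ some $F\in\mathcal{F}_{\min}(D)$ contains $e$. This is (1) verbatim.

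\emph{Parts (2) and (3).} Now suppose $e$ is a ribbon loop, so that $e\notin F$ for every $F\in\mathcal{F}_{\min}(D)$, and write $r=r(D_{\min})$. The feasible sets of $D*e$ are the sets $F\Delta e$ with $F\in\mathcal{F}$. Those not containing $e$ are the sets $F-e$ with $e\in F$, and those containing $e$ are the sets $F\cup e$ with $e\notin F$. I would determine $\mathcal{F}_{\min}(D*e)$ as follows.
\begin{description}
\item[1.] Every $F\in\mathcal{F}$ containing $e$ has $|F|\ge r+1$, because no minimum feasible set contains $e$. Hence every set $F-e$ has size at least $r$.
\item[2.] Every set $F\cup e$ has size at least $r+1$.
\item[3.] If some $F_1\in\mathcal{F}_{\min+1}(D)$ contains $e$, then $F_1-e$ has size $r$. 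So $r(D*e)_{\min}=r$, and by step 2 every minimum feasible set of $D*e$ has the form $F-e$. All of these avoid $e$, so $e$ is still a ribbon loop in $D*e$. Thus $e$ is non-orientable and its primal type is $t$.
\item[4.] If no member of $\mathcal{F}_{\min+1}(D)$ contains $e$, then every feasible $F\ni e$ has $|F|\ge r+2$. Hence all feasible sets of $D*e$ have size at least $r+1$. Taking any $F\in\mathcal{F}_{\min}(D)$ (nonempty because $D$ is proper), the set $F\cup e$ is feasible in $D*e$, has size exactly $r+1$, and contains $e$. So $e$ is not a ribbon loop of $D*e$, i.e.\ $e$ is orientable with primal type $u$.
\end{description}
Steps 3 and 4 are exhaustive and exclusive for a ribbon loop. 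Combined with (1), this gives the equivalences (2) and (3). In (2), the condition "for every $F\in\mathcal{F}_{\min}\cup\mathcal{F}_{\min+1}$, $e\notin F$" is precisely "ribbon loop and case 4".

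\emph{Parts (4)--(6).} The dual type of $e$ in $D$ is its primal type in $D^*$, whose feasible sets are the complements $E-F$ for $F\in\mathcal{F}$. Complementation sends $\mathcal{F}_{\max-i}(D)$ bijectively onto $\mathcal{F}_{\min+i}(D^*)$, and $e\in E-F$ if and only if $e\notin F$. Applying (1)--(3) to $D^*$ and translating back yields (4)--(6).

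The only delicate point is the case analysis in steps 3 and 4, specifically checking that no feasible set of $D*e$ is smaller than the claimed minimum. That check is exactly steps 1 and 2.
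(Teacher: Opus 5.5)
Your proof is correct and complete. The paper gives no proof of its own here: it quotes the proposition from \cite{YQ2024}, so there is no argument in the paper to compare yours against.

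Your argument only counts cardinalities under the definition of twist and never uses the symmetric exchange axiom. It therefore establishes the proposition for arbitrary proper set systems, which is the generality the paper needs. For instance, the Remark following Lemma~\ref{lem:prim_delete} applies the proposition to a set system that is not a delta-matroid. That system fails the exchange axiom for $X=\{1,2\}$, $Y=\{3,4,5\}$, $u=1$.

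The properness caveat you flag is genuinely necessary and not just a technicality. If $\mathcal{F}=\emptyset$, then $e$ is vacuously a ribbon loop of both $D$ and $D*e$, so its primal type is $t$. Yet the right-hand side of (3) fails, and that of (2) holds vacuously. The paper only ever applies the proposition to proper systems, so this does not affect its later arguments.
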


For a set system $D = (E, \mathcal{F})$, the width of $D*e$ for each type of $e \in E$ is given in \cite{YQ2024} and reproduced in Table~\ref{tab:widths}. Note that in all cases $$|w(D*e)-w(D)| \le 2.$$ Although the table is stated in \cite{YQ2024} for vf-safe delta-matroids, the verification for each type uses only the definition of twist and applies equally to arbitrary set systems.

\begin{table}[t]
  \centering
  \caption{The width of $D*e$}
  \label{tab:widths}
  \begin{tabular}{llll}
    \hline\noalign{\smallskip}
    Type of $e$ & $r(D*e)_{\text{min}}$ & $r(D*e)_{\text{max}}$ & $w(D*e)$\\
    \noalign{\smallskip}\hline\noalign{\smallskip}
    $pp$ & $r(D_{\text{min}})-1$ & $r(D_{\text{max}})+1$ & $w(D)+2$ \\
    $uu$ & $r(D_{\text{min}})+1$ & $r(D_{\text{max}})-1$ &  $w(D)-2$ \\
    $pu$ & $r(D_{\text{min}})-1$ & $r(D_{\text{max}})-1$ & $w(D)$ \\
    $up$ & $r(D_{\text{min}})+1$ & $r(D_{\text{max}})+1$ &   $w(D)$  \\
    $tp$ & $r(D_{\text{min}})$ & $r(D_{\text{max}})+1$ &   $w(D)+1$ \\
    $tu$ & $r(D_{\text{min}})$ & $r(D_{\text{max}})-1$ &  $w(D)-1$ \\
    $pt$ & $r(D_{\text{min}})-1$ & $r(D_{\text{max}})$ &  $w(D)+1$ \\
    $ut$ & $r(D_{\text{min}})+1$ & $r(D_{\text{max}})$ &  $w(D)-1$ \\
    $tt$ & $r(D_{\text{min}})$ & $r(D_{\text{max}})$ &   $w(D)$ \\
    \noalign{\smallskip}\hline
  \end{tabular}
\end{table}

\subsection{Ribbon graphs}

\begin{definition}[\cite{BR}]
A {\it ribbon graph} $G$ is a (orientable or non-orientable) surface with boundary,
represented as the union of two sets of topological discs, a set $V(G)$ of vertices, and a set $E(G)$ of edges, subject to the following conditions.
\begin{description}
\item[(1)] The vertices and edges intersect in disjoint line segments.
\item[(2)] Each such line segment lies on the boundary of precisely one vertex and precisely one edge.
\item[(3)] Every edge contains exactly two such line segments.
\end{description}
\end{definition}

A ribbon graph is \emph{orientable} if its underlying surface is orientable; otherwise, it is \emph{non-orientable}.
 If $G$ is a ribbon graph, we denote by $f(G)$ the number of boundary components of $G$, and we define $v(G)$, $e(G)$, and $c(G)$ to be the number of vertices, edges, and connected components of $G$, respectively. We let \[\chi(G)=v(G)-e(G)+f(G),\] the usual \emph{Euler characteristic}, which holds for any ribbon graph, connected or not. The \emph{Euler genus} of $G$ is
\[
\varepsilon(G)=2c(G)-\chi(G).
\]

\begin{definition}[\cite{CSL2007}]
For a ribbon graph $G$ and $A\subseteq E(G)$,  the \emph{partial dual} $G^{A}$ of $G$ with respect to $A$ is a ribbon graph obtained from $G$ by gluing a disc to $G$ along each boundary component of the spanning ribbon subgraph $(V(G), A)$ $($such discs will be the vertex-discs of $G^{A})$, removing the interiors of all the vertex-discs of $G$ and keeping the edge-discs unchanged.
\end{definition}

The \emph{maximum partial-dual Euler-genus $\varepsilon_M(G)$} of a ribbon graph $G$ is
\[
\varepsilon_M(G) = \max\{\, \varepsilon(G^A) \mid A \subseteq E(G) \,\}.
\]

A \emph{quasi-tree} is a ribbon graph with exactly one boundary component. A ribbon subgraph $H$ of a connected ribbon graph $G$ is a \emph{spanning quasi-tree} if $H$ is a quasi-tree and has the same vertex set as $G$. If $G$ is not connected then we say a ribbon subgraph $H$ is a \emph{spanning quasi-tree} of $G$ if it is a disjoint union of spanning quasi-trees of all connected components of $G$.

\begin{definition}[\cite{CH2019}]
Let $G$ be a ribbon graph and let
\[
\mathcal{F} = \{ F \subseteq E(G) \mid \text{$F$ is the edge set of a spanning quasi-tree of $G$} \}.
\]
The delta-matroid of $G$ is $D(G) = (E(G), \mathcal{F})$. We say that a delta-matroid is \emph{ribbon-graphic} if it is isomorphic to $D(G)$ for some ribbon graph $G$.
\end{definition}

For a polynomial $p(z)=\sum_{i=0}^{n}c_i z^i$, we say that $p(z)$ has a \emph{gap of size $k$ at coefficient $c_i$} if $c_{i-1}c_{i+k}\neq 0$ but $c_i=c_{i+1}=\cdots=c_{i+k-1}=0$.
The polynomial $p(z)$ is \emph{interpolating} if it is non-zero and has no gaps.
Write $p(z)=p_e(z^2)+z\,p_o(z^2)$, where $p_e(z^2)$ and $p_o(z^2)$ consist of the even-degree and odd-degree terms of $p(z)$, respectively.
We call $p(z)$ \emph{even-interpolating} (resp.\ \emph{odd-interpolating}) if $p_e(z^2)$ (resp.\ $p_o(z^2)$) is interpolating.
An \emph{even} (resp.\ \emph{odd}) polynomial is a polynomial such that the only terms that have non-zero coefficients are the terms of even (resp.\ odd) degree.

\section{Proof of main results}

\begin{lemma}[\cite{BC2021}] \label{lemma1}
If $X$ is any feasible set in a delta-matroid $D$, then there exist
$A\in \mathcal{F}_{\mathrm{min}}(D)$ and $B\in \mathcal{F}_{\mathrm{max}}(D)$
such that $A\subseteq X \subseteq B$.
\end{lemma}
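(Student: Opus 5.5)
The plan is to prove the two inclusions separately. I first show directly from the Symmetric Exchange Axiom that $X$ lies inside some maximum feasible set. I then get the lower bound by applying the same statement to the dual $D^{*}=D*E$.

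\emph{Upper bound.} Among all $B\in\mathcal{F}_{\max}(D)$, I choose one for which $|X-B|$ is as small as possible, and I claim $X\subseteq B$. Suppose not, and pick $u\in X-B\subseteq B\,\Delta\, X$. Applying the Symmetric Exchange Axiom to the pair $(B,X)$ and the element $u$ gives some $v\in B\,\Delta\, X$ with $B\,\Delta\,\{u,v\}\in\mathcal{F}$. There are three cases.
\begin{description}
\item[(1)] If $v=u$, then $B\cup u$ is feasible and larger than $B$. This contradicts $B\in\mathcal{F}_{\max}(D)$.
\item[(2)] If $v\in X-B$ with $v\neq u$, then $B\cup\{u,v\}$ is feasible and larger than $B$, which is again a contradiction.
\item[(3)] If $v\in B-X$, then $B'=(B-v)\cup u$ is feasible with $|B'|=|B|$, so $B'\in\mathcal{F}_{\max}(D)$. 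Moreover $X-B'=(X-B)-u$, because $v\notin X$. This contradicts the minimality of $|X-B|$.
\end{description}
Hence $X\subseteq B$.

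\emph{Lower bound.} Since the twist of a delta-matroid is a delta-matroid, $D^{*}$ is a delta-matroid, and $E-X$ is feasible in $D^{*}$. The feasible sets of $D^{*}$ are exactly the complements of those of $D$, so complementation reverses cardinalities and gives $\mathcal{F}_{\max}(D^{*})=\{E-A\mid A\in\mathcal{F}_{\min}(D)\}$. By the first part applied to $D^{*}$, there is $A\in\mathcal{F}_{\min}(D)$ with $E-X\subseteq E-A$, that is, $A\subseteq X$. Combining the two parts gives $A\subseteq X\subseteq B$.

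The only step needing care is the case analysis on $v$ in the upper bound, specifically making sure every possible position of $v$ yields a contradiction. The case $v\in B-X$ is where the extremal choice of $B$ is actually used.
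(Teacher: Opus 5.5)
Your proof is correct. In the upper-bound argument the three possible positions of $v$ in $B\,\Delta\,X=(B-X)\cup(X-B)$ are exhaustive. The first two cases contradict the maximality of $|B|$. In the third case, $B'=(B-v)\cup u$ is again a maximum feasible set with $|X-B'|=|X-B|-1$, because $v\notin X$. The lower bound then follows correctly by applying this to $D^{*}=D*E$, whose maximum feasible sets are exactly the complements of the minimum feasible sets of $D$.

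The paper gives no proof of this lemma and simply imports it from \cite{BC2021}, so there is no in-paper argument to compare with. Your argument uses only two facts already stated in the preliminaries: the Symmetric Exchange Axiom, and the fact that a twist of a delta-matroid is a delta-matroid. It would therefore make this step of the paper self-contained. Its extremal-choice-plus-exchange pattern is also the same device the authors use themselves in case $(p)$ of the proof of Lemma~\ref{lem:prim_delete}.
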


\begin{lemma}[\cite{BH2013}]
\label{corollary2}
For a delta-matroid $D=(E,\mathcal{F})$ and $e\in E$, if the primal type of $e$ is $t$ in $D$, then $$\mathcal{F}_{\mathrm{min}}(D*e) = \mathcal{F}_{\mathrm{min}}(D).$$

\end{lemma}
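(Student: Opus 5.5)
The plan is to compute $\mathcal{F}_{\min}(D*e)$ directly from the definition of the twist and then compare it with $\mathcal{F}_{\min}(D)$. I use Proposition~\ref{lemma6}(3), Lemma~\ref{lemma1} and the Symmetric Exchange Axiom. The inclusion $\mathcal{F}_{\min}(D*e)\subseteq\mathcal{F}_{\min}(D)$ is routine; the reverse inclusion is the step I have not closed.

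Write $r=r(D_{\min})$. By Proposition~\ref{lemma6}(3), $e$ lies in no set of $\mathcal{F}_{\min}(D)$, and there is some $F_1\in\mathcal{F}_{\min+1}(D)$ with $e\in F_1$. The feasible sets of $D*e$ are the sets $F\Delta e$ with $F\in\mathcal{F}$.

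\textbf{Step 1: the minimum size of $D*e$ is $r$.}
\begin{itemize}
\item If $e\notin F$, then $|F\Delta e|=|F|+1\ge r+1$.
\item If $e\in F$, then $|F|\ge r+1$, because no set of size $r$ contains $e$. Hence $|F\Delta e|\ge r$.
\item Equality $|F\Delta e|=r$ occurs exactly when $e\in F\in\mathcal{F}_{\min+1}(D)$, for example $F=F_1$.
\end{itemize}
Therefore $r((D*e)_{\min})=r$, as Table~\ref{tab:widths} predicts, and
\[
\mathcal{F}_{\min}(D*e)=\{F-e \mid F\in\mathcal{F}_{\min+1}(D),\ e\in F\}.
\]

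\textbf{Step 2: $\mathcal{F}_{\min}(D*e)\subseteq\mathcal{F}_{\min}(D)$.} Let $F\in\mathcal{F}_{\min+1}(D)$ with $e\in F$.
\begin{itemize}
\item By Lemma~\ref{lemma1} there is $A\in\mathcal{F}_{\min}(D)$ with $A\subseteq F$.
\item Since $|A|=|F|-1$ and $e\notin A$, we must have $A=F-e$.
\end{itemize}
So $F-e\in\mathcal{F}_{\min}(D)$.

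\textbf{Step 3: $\mathcal{F}_{\min}(D)\subseteq\mathcal{F}_{\min}(D*e)$.} By Step 1, this is equivalent to showing $A\cup e\in\mathcal{F}$ for every $A\in\mathcal{F}_{\min}(D)$. This is the main obstacle.

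First attempt: apply the Symmetric Exchange Axiom to $X=A$, $Y=F_1$ and $u=e$. This gives $v\in A\Delta F_1$ with $A\Delta\{e,v\}\in\mathcal{F}$.
\begin{itemize}
\item If $v=e$, then $A\cup e\in\mathcal{F}$ and we are done.
\item If $v\in A$, then $A\cup e-v$ is a feasible set of size $r$ containing $e$, which is impossible.
\item The remaining case is $v\in F_1-A-e$, giving $A\cup\{e,v\}\in\mathcal{F}$ of size $r+2$. This case is not yet resolved.
\end{itemize}

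Plan for the remaining case: argue by contradiction, using an extremal choice and a second exchange.
\begin{enumerate}
\item Among all feasible sets of size $r+1$ containing $e$, choose $F_1$ minimising $|F_1-A|$.
\item Suppose $F_1\ne A\cup e$. Pick $u\in A-F_1$ and apply exchange to $X=F_1$, $Y=A$.
\item If the resulting $v$ lies in $F_1-A-e$, then $F_1\cup u-v$ is a feasible set of size $r+1$ containing $e$ and strictly closer to $A$, contradicting the choice of $F_1$.
\item If $v=e$, the new set $(F_1-e)\cup u$ has size $r+1$ and does not contain $e$; this does not reduce $|F_1-A|$ directly.
\item If the exchange only produces sets of size $r+2$, I would apply Lemma~\ref{lemma1} to such a set to find a minimal feasible set beneath it, and then use Step 2 to relate it back to $F_1$.
\end{enumerate}
The aim is to show that the case analysis in item 2 always reaches item 3, so $F_1$ can be moved closer to $A$ until $F_1=A\cup e$.

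Fallback if this does not close: use the known fact that $D_{\min}$ is a matroid. Step 2 identifies $\mathcal{F}_{\min}(D*e)$ as a nonempty subfamily of the bases of $D_{\min}$, and $(D*e)_{\min}$ is also a matroid. I would then try to show that this subfamily is closed under basis exchange within $D_{\min}$, by running the exchange argument between $A\cup e$-type sets and elements of $\mathcal{F}_{\min+1}(D)$. If so, it is a union of connected blocks of the basis exchange graph, which is connected, so it must be all of $\mathcal{F}_{\min}(D)$.
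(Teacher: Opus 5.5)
Your Steps 1 and 2 are correct. Step 3 is the real content of the lemma, the inclusion $\mathcal{F}_{\min}(D)\subseteq\mathcal{F}_{\min}(D*e)$, and it is left open; your extremal plan does not close it as written. (The paper imports this lemma without proof, so the comparison is with a complete argument rather than with the paper's.)

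The trouble is item 2. Taking $u\in A-F_1$ means $u\notin X=F_1$, so the exchange \emph{adds} $u$. The outcomes $v=u$ (giving $F_1\cup u$, of size $r+2$), $v=e$ (giving $(F_1-e)\cup u$, which no longer contains $e$) and $v\in A-F_1$ with $v\neq u$ (giving a set of size $r+3$) contradict nothing. Item 5 does not rescue this, because Lemma~\ref{lemma1} only produces minimum and maximum feasible sets, never a feasible $(r+1)$-set containing $e$.

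The missing idea is to exchange in the other direction, removing an element of $F_1$:
\begin{itemize}
\item Keep $F_1$ as in your item 1 and suppose $F_1\neq A\cup e$. Since $|F_1-e|=r=|A|$ and $F_1-e\neq A$, there is some $u\in(F_1-e)-A$.
\item Apply the Symmetric Exchange Axiom to $X=F_1$, $Y=A$ and this $u$, obtaining $v\in X\Delta Y$.
\item If $v=u$, then $F_1-u$ is a feasible set of size $r$ containing $e$, contradicting Proposition~\ref{lemma6}(3).
\item If $v\in F_1-A$ with $v\neq u$ (this includes $v=e$), then $F_1-\{u,v\}$ is feasible of size $r-1$, contradicting the minimality of $r$.
\item Hence $v\in A-F_1$. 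Then $(F_1-u)\cup v$ is a feasible set of size $r+1$ containing $e$, with $|((F_1-u)\cup v)-A|=|F_1-A|-1$. This contradicts the choice of $F_1$.
\end{itemize}
Therefore $F_1=A\cup e\in\mathcal{F}$, that is, $A\in\mathcal{F}_{\min}(D*e)$.

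The same exchange, with $Y$ taken to be a basis of $D_{\min}$ adjacent to $F_1-e$, is exactly the closure property your basis-graph fallback needs. As submitted, however, neither route is carried out, so the proof is incomplete.
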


\begin{lemma}\label{lem:prim_delete}
Let $D=(E,\mathcal{F})$ be a delta-matroid and $e_1,e_2\in E$ with $e_1\neq e_2$.
Then the primal type of $e_2$ in $D- e_1$ is the same as its primal type in $D$.
\end{lemma}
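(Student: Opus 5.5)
The plan is to reduce everything to Proposition~\ref{lemma6}(1)--(3). These characterise the primal type of $e_2$ purely by whether $e_2$ belongs to some member of $\mathcal{F}_{\min}$ and of $\mathcal{F}_{\min+1}$. Since the three primal types $p,u,t$ are mutually exclusive and exhaustive, it suffices to show two things: $e_2$ lies in some set of $\mathcal{F}_{\min}(D-e_1)$ iff it lies in some set of $\mathcal{F}_{\min}(D)$, and likewise for $\mathcal{F}_{\min+1}$. I split according to whether $e_1$ is a coloop of $D$.

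\emph{Case 1: $e_1$ is a coloop.} The feasible sets of $D-e_1$ are exactly $F-e_1$ for $F\in\mathcal{F}$, so all sizes drop by exactly one. Hence $\mathcal{F}_{\min}(D-e_1)=\{F-e_1\mid F\in\mathcal{F}_{\min}(D)\}$, and similarly for $\mathcal{F}_{\min+1}$. Because $e_2\neq e_1$, membership of $e_2$ is unchanged, and the primal type is preserved.

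\emph{Case 2: $e_1$ is not a coloop.} Then $\mathcal{F}(D-e_1)=\{F\in\mathcal{F}\mid e_1\notin F\}$, which is nonempty. Pick such an $X$. By Lemma~\ref{lemma1} there is $A\in\mathcal{F}_{\min}(D)$ with $A\subseteq X$, so $e_1\notin A$. Thus $r((D-e_1)_{\min})=r(D_{\min})$, and
\[
\mathcal{F}_{\min}(D-e_1)=\{F\in\mathcal{F}_{\min}(D)\mid e_1\notin F\},\qquad \mathcal{F}_{\min+1}(D-e_1)=\{F\in\mathcal{F}_{\min+1}(D)\mid e_1\notin F\}.
\]
The directions ``$e_2$ in some set of $D-e_1$ $\Rightarrow$ in some set of $D$'' are therefore immediate. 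In particular, type $u$ in $D$ forces type $u$ in $D-e_1$. It remains to lift membership back down.

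\emph{Type $p$ in $D$.} Take $F\in\mathcal{F}_{\min}(D)$ with $e_2\in F$; if $e_1\notin F$ we are done. Otherwise apply the Symmetric Exchange Axiom to $F$ and $A$ with $u=e_1\in F\Delta A$, obtaining $v$ with $F\Delta\{e_1,v\}\in\mathcal{F}$. By minimality of $|F|$, neither $v=e_1$ nor $v\in F$ is possible, since either would give a feasible set smaller than $r(D_{\min})$. Hence $v\in A-F$, and $F-e_1+v\in\mathcal{F}_{\min}(D)$. This set avoids $e_1$ and still contains $e_2$, because $v\neq e_2$. So $e_2$ is of type $p$ in $D-e_1$.

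\emph{Type $t$ in $D$.} Here $e_2$ is in no minimal set, hence not of type $p$ in $D-e_1$. Take $F_1\in\mathcal{F}_{\min+1}(D)$ with $e_2\in F_1$; if $e_1\in F_1$, exchange with $A$ at $u=e_1$ to get $v$.
\begin{itemize}
\item If $v=e_1$, then $F_1-e_1$ is a minimal feasible set containing $e_2$, contradicting type $t$.
\item If $v\in F_1-e_1$, the result has size $r(D_{\min})-1$, which is impossible.
\item So $v\in A-F_1$, and $F_1-e_1+v\in\mathcal{F}_{\min+1}(D)$ avoids $e_1$ and contains $e_2$.
\end{itemize}
Thus $e_2$ is of type $t$ in $D-e_1$.

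The main care point is this exchange step in Case~2: one must rule out the degenerate choices of $v$ by size counting, and in the type-$t$ case one must also invoke the hypothesis that $e_2$ is in no minimal feasible set.
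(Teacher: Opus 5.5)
Your proof is correct, and its skeleton matches the paper's. Both use the same coloop/non-coloop split and the same use of Lemma~\ref{lemma1} to get $r((D-e_1)_{\min})=r(D_{\min})$ and identify $\mathcal{F}_{\min}(D-e_1)$ and $\mathcal{F}_{\min+1}(D-e_1)$. The type-$p$ exchange argument is also essentially the same; the paper phrases it as a contradiction, while you argue directly.

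The one real difference is the type-$t$ case. The paper invokes Lemma~\ref{corollary2}, $\mathcal{F}_{\min}(D*e_2)=\mathcal{F}_{\min}(D)$. This gives $M\cup e_2\in\mathcal{F}$ for every $M\in\mathcal{F}_{\min}(D)$, and the paper then takes such an $M$ avoiding $e_1$. You instead rerun the exchange at $u=e_1$ between $F_1$ and $A$: the type-$t$ hypothesis excludes $v=e_1$, and size counting excludes $v\in F_1$. Your version is self-contained, needing only the exchange axiom and Lemma~\ref{lemma1}, and handles the $p$ and $t$ cases with one template. The paper's version gives an explicit witness and a stronger structural fact, at the price of an external citation.

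One small correction to your opening reduction: the ``likewise for $\mathcal{F}_{\min+1}$'' equivalence is false in general. Take $D=(\{e_1,e_2\},\{\{e_1\},\{e_2\},\{e_1,e_2\}\})$, which is a delta-matroid. Here $e_2$ lies in a member of $\mathcal{F}_{\min+1}(D)$, but $\mathcal{F}_{\min+1}(D-e_1)=\emptyset$. This does not damage your argument, because type $p$ depends only on $\mathcal{F}_{\min}$-membership. Your case analysis only proves the $\mathcal{F}_{\min+1}$ statement when $e_2$ lies in no minimal feasible set, and the reduction should be stated with exactly that restriction.
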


\begin{proof}
Let $r = r(D_{\min})$. We consider two cases.

\noindent \textbf{Case 1:} $e_1$ is a coloop.
Then every feasible set contains $e_1$, so
\[
D- e_1 = (E-e_1,\;\{F-e_1\mid F\in\mathcal{F}\}).
\]
Consequently,
\[
\begin{aligned}
\mathcal{F}_{\min}(D- e_1) &= \{F - e_1 \mid F \in \mathcal{F}_{\min}(D)\},\\
\mathcal{F}_{\min+1}(D- e_1) &= \{F - e_1 \mid F \in \mathcal{F}_{\min+1}(D)\}.
\end{aligned}
\]
We verify that the primal type of $e_2$ is preserved in each of the three subcases.

\begin{description}
\item[($p$)] Since the primal type of $e_2$ in $D$ is $p$, by Proposition~\ref{lemma6}(1), there exists $F\in\mathcal{F}_{\min}(D)$ with $e_2\in F$.
Then $$e_2\in F-e_1\in\mathcal{F}_{\min}(D- e_1).$$
By Proposition~\ref{lemma6}(1), the primal type of $e_2$ in $D- e_1$ is $p$.

\item[($u$)] For any $F'\in\mathcal{F}_{\min}(D- e_1)\cup\mathcal{F}_{\min+1}(D- e_1)$,
there exists $F\in\mathcal{F}_{\min}(D)\cup\mathcal{F}_{\min+1}(D)$ such that $F' = F - e_1$.
Since the primal type of $e_2$ in $D$ is $u$, by Proposition~\ref{lemma6}(2), $e_2\notin F$.
Hence $e_2\notin F'$. By Proposition~\ref{lemma6}(2), the primal type of $e_2$ in $D- e_1$ is $u$.

\item[($t$)] For any $F'\in\mathcal{F}_{\min}(D- e_1)$, there exists $F\in\mathcal{F}_{\min}(D)$ such that $F' = F - e_1$.
Since the primal type of $e_2$ in $D$ is $t$, Proposition~\ref{lemma6}(3) gives $e_2\notin F$; hence $e_2\notin F'$.
Moreover, there exists $F_1\in\mathcal{F}_{\min+1}(D)$ with $e_2\in F_1$.
Then $F_1 - e_1 \in \mathcal{F}_{\min+1}(D- e_1)$ and $e_2\in F_1 - e_1$.
By Proposition~\ref{lemma6}(3), the primal type of $e_2$ in $D- e_1$ is $t$.
\end{description}

\noindent \textbf{Case 2:} $e_1$ is not a coloop.
Then there exists $F_0 \in \mathcal{F}$ with $e_1 \notin F_0$.
By Lemma~\ref{lemma1}, there exists $M \in \mathcal{F}_{\min}(D)$ such that $M \subseteq F_0$. Hence $e_1 \notin M$.
Thus $M \in \mathcal{F}_{\min}(D - e_1)$, and $r_{\min}(D - e_1) = r$.
From the definition of deletion it follows that
\[
\begin{aligned}
\mathcal{F}_{\min}(D - e_1) &= \{F \in \mathcal{F}_{\min}(D) \mid e_1 \notin F\},\\
\mathcal{F}_{\min+1}(D - e_1) &= \{F \in \mathcal{F}_{\min+1}(D) \mid e_1 \notin F\}.
\end{aligned}
\]
We consider the three possibilities for the primal type of $e_2$ in $D$.

\begin{description}
\item[($u$)] For any $F'\in\mathcal{F}_{\min}(D- e_1)\cup\mathcal{F}_{\min+1}(D- e_1)$, we have $$F'\in\mathcal{F}_{\min}(D)\cup\mathcal{F}_{\min+1}(D).$$
Since the primal type of $e_2$ in $D$ is $u$, by Proposition~\ref{lemma6}(2), $e_2\notin F'$. Hence, the primal type of $e_2$ in $D- e_1$ is $u$.

\item[($t$)] For any $F'\in\mathcal{F}_{\min}(D - e_1)$, we have $F'\in\mathcal{F}_{\min}(D)$.
Since the primal type of $e_2$ in $D$ is $t$, Proposition~\ref{lemma6}(3) gives $e_2\notin F'$.
To verify the second requirement of Proposition~\ref{lemma6}(3), choose $F_2\in\mathcal{F}_{\min+1}(D)$ with $e_2\in F_2$.
If $e_1\notin F_2$, then $F_2\in\mathcal{F}_{\min+1}(D - e_1)$.
If $e_1\in F_2$, we construct another suitable set as follows.
By Lemma~\ref{corollary2}, $\mathcal{F}_{\min}(D*e_2)=\mathcal{F}_{\min}(D)$.
Choose $M\in\mathcal{F}_{\min}(D)$ with $e_1\notin M$ (such an $M$ exists by Lemma~\ref{lemma1}).
Since the primal type of $e_2$ in $D$ is $t$, Proposition~\ref{lemma6}(3) implies $e_2\notin M$.
Because $$M\in\mathcal{F}_{\min}(D)=\mathcal{F}_{\min}(D*e_2),$$ we have $M\Delta e_2\in\mathcal{F}(D)$.
As $e_2\notin M$, $M\Delta e_2 = M\cup e_2$, hence $M\cup e_2\in\mathcal{F}(D)$ and $|M\cup e_2| = r+1$.
Thus $M\cup e_2\in\mathcal{F}_{\min+1}(D)$.
Since $e_1\notin M$ and $e_2\neq e_1$, we have $e_1\notin M\cup e_2$. Hence $M\cup e_2\in\mathcal{F}_{\min+1}(D - e_1)$ and $e_2\in M\cup e_2$.
Thus in either subcase there exists a set in $\mathcal{F}_{\min+1}(D - e_1)$ containing $e_2$.
By Proposition~\ref{lemma6}(3), the primal type of $e_2$ in $D - e_1$ is $t$.

\item[($p$)] We first show that there exists $F\in \mathcal{F}_{\min}(D)$ such that $e_1\notin F$ and $e_2\in F$.
Assume the contrary: every $F \in \mathcal{F}_{\min}(D)$ that contains $e_2$ also contains $e_1$.
Since the primal type of $e_2$ in $D$ is $p$, there exists $F_1\in \mathcal{F}_{\min}(D)$ with $e_2 \in F_1$ by Proposition~\ref{lemma6}(1).
The assumption then forces $e_1 \in F_1$.
Because $e_1$ is not a coloop, there exists $F_2 \in \mathcal{F}_{\min}(D)$ with $e_1 \notin F_2$.
Apply the Symmetric Exchange Axiom to $F_1, F_2$ and $u = e_1 \in F_1 \setminus F_2$.
There exists $v \in F_1 \Delta F_2$ such that $F_1 \Delta \{e_1, v\} \in \mathcal{F}(D)$.
We determine the possible size of $F_1 \Delta \{e_1, v\}$.
Since $|F_1| = r$, the size depends on $v$ as follows:
\begin{itemize}
\item If $v = e_1$, then $|F_1 \Delta \{e_1\}| = r - 1$, which is impossible because $r$ is the minimum size of a feasible set.
\item If $v \in F_1 \setminus \{e_1\}$, then $F_1 \Delta \{e_1, v\} = F_1 \setminus \{e_1, v\}$ has size $r - 2$, again a contradiction.
\item Otherwise, $v \in F_2 \setminus F_1$. In this case $F_1 \Delta \{e_1, v\} = (F_1 \setminus \{e_1\}) \cup \{v\}$ has size $r$, so $F_1 \Delta \{e_1, v\}\in\mathcal{F}_{\min}(D)$.
Because $e_2 \in F_1$ and $e_2 \neq e_1$, we have $e_2 \in F_1 \setminus \{e_1\}$ and  $v \neq e_2$.
Hence $e_2\in F_1 \Delta \{e_1, v\}$, but $e_1\notin F_1 \Delta \{e_1, v\}$, contradicting the assumption that every minimum feasible set containing $e_2$ also contains $e_1$.
\end{itemize}
Hence there exists $F_0 \in \mathcal{F}_{\min}(D)$ with $e_2 \in F_0$ and $e_1 \notin F_0$.
Then $F_0 \in \mathcal{F}_{\min}(D - e_1)$, and Proposition~\ref{lemma6}(1) yields that the primal type of $e_2$ in $D - e_1$ is $p$.
\end{description}
\end{proof}

\begin{remark}
Lemma~\ref{lem:prim_delete} is not true for set systems. For example, let
\[
D = (\{1,2,3,4,5,6\},\, \{\{1,2\},\{3,4,5\},\{3,4,5,6\}\}).
\]
Then
\[
D-\{1\} = (\{2,3,4,5,6\},\, \{\{3,4,5\},\{3,4,5,6\}\}).
\]
We have
\begin{itemize}
    \item $2$ has primal type $p$ in $D$, but $u$ in $D-\{1\}$;
    \item $3$ has primal type $t$ in $D$, but $p$ in $D-\{1\}$;
    \item $6$ has primal type $u$ in $D$, but $t$ in $D-\{1\}$.
\end{itemize}
\end{remark}

\begin{lemma}
    Let $D = (E, \mathcal{F})$ be a binary delta-matroid. If every element of $E$ has primal type $u$ in $D$, then $D$ is even.
    \label{Lemma4}
\end{lemma}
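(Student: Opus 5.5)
Let $F_0\in\mathcal{F}_{\min}(D)$ be fixed and consider the twist $D'=D*F_0$, which is a binary delta-matroid. It is normal, since $\emptyset = F_0\Delta F_0$ is feasible in it. Evenness is preserved under twisting, because $|(F_0\Delta X)\Delta(F_0\Delta Y)|=|X\Delta Y|$. Hence it suffices to show that $D'$ is even. Write $D'=D(C)$ for the unique symmetric matrix $C$ over $\mathrm{GF}(2)$ given by the construction (1)--(2). A normal binary delta-matroid $D(C)$ is even exactly when every diagonal entry of $C$ is $0$. Indeed, if $C$ has zero diagonal then $C$ is alternating, so each nonsingular principal submatrix $C[A]$ is alternating and hence of even order. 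So every feasible set has even size, and all symmetric differences are even. Thus the goal is to show that $\{v\}\notin\mathcal{F}(D')$ for every $v\in E$, that is, $F_0\Delta v\notin\mathcal{F}(D)$ for all $v$.

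For $v\notin F_0$, we have $F_0\Delta v=F_0\cup v$, which has size $r(D_{\min})+1$. If it were feasible it would lie in $\mathcal{F}_{\min+1}(D)$ and contain $v$. This contradicts Proposition~\ref{lemma6}(2), since $v$ has primal type $u$.

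For $v\in F_0$, the set $F_0-v$ has size $r(D_{\min})-1$, so it is not feasible by minimality. In fact this case is vacuous: by Proposition~\ref{lemma6}(1), type $u$ for every element forces every minimum feasible set to avoid all of $E$. Hence $\mathcal{F}_{\min}(D)=\{\emptyset\}$, so $F_0=\emptyset$ and $D$ is itself normal.

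This simplifies the argument. Since $D$ is normal and binary, $D=D(C)$ directly. Each singleton $\{v\}$ would lie in $\mathcal{F}_{\min+1}(D)$ and contain $v$, so it is not feasible, and therefore the diagonal of $C$ vanishes. The only real step is the standard fact that a symmetric matrix over $\mathrm{GF}(2)$ with zero diagonal has nonsingular principal submatrices only of even order. This holds because alternating matrices have even rank: the determinant of an odd-order alternating matrix is zero, which one can show via the Pfaffian or by pairing terms $\sigma\leftrightarrow\sigma^{-1}$ in the determinant expansion, with fixed-point-free involutions excluded for odd order. The subtle point I expect to check carefully is exactly this characteristic-$2$ fact, since the usual argument $\det C=\det C^T=(-1)^n\det C$ fails over $\mathrm{GF}(2)$. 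The zero diagonal is essential there, and I would cite or reprove the even-rank property of alternating forms.
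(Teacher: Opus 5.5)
Your proposal is correct and, once you drop the unnecessary twist by $F_0$ (which you yourself notice is vacuous), it matches the paper's proof. Type $u$ forces $\mathcal{F}_{\min}(D)=\{\emptyset\}$ and rules out feasible singletons, so $D=D(C)$ with zero diagonal, and odd-order principal submatrices of a zero-diagonal symmetric $\mathrm{GF}(2)$ matrix are singular. Your $\sigma\leftrightarrow\sigma^{-1}$ pairing argument is a valid justification of the even-rank fact that the paper simply cites as basic.
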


\begin{proof}
Since every $e \in E$ has primal type $u$ in $D$, it follows that $e \notin F$ for all
$F \in \mathcal{F}_{\min}(D) \cup \mathcal{F}_{\min+1}(D)$ by Proposition~\ref{lemma6}(2).
Consequently $\mathcal{F}_{\min}(D) = \{\emptyset\}$ and $\mathcal{F}_{\min+1}(D) = \emptyset$, i.e., $D$ is normal and $r_{\min}(D) = 0$, and there are no feasible singletons. Then there exists a unique symmetric
matrix $C$ over $\mathrm{GF}(2)$ with rows and columns indexed by $E$ such that $D = D(C)$.
The diagonal entry $C_{vv}$ equals $1$ if and only if $\{v\} \in \mathcal{F}$.
As no singleton is feasible, we have $C_{vv} = 0$ for every $v \in E$.

Now suppose that $D$ is not even. Then there exists a feasible set $A \in \mathcal{F}$
with $|A|$ odd. The principal submatrix $C[A]$ is a symmetric matrix over $\mathrm{GF}(2)$ with all diagonal
entries equal to $0$. It is a basic fact that the rank of such a matrix is even.
Since $|A|$ is odd, $$\operatorname{rank}(C[A]) \le |A|-1 < |A|.$$ Hence $\det(C[A]) = 0$.
This contradicts $A \in \mathcal{F}$, because feasible sets correspond to non-singular principal submatrices.
Therefore every feasible set has even cardinality, and $D$ is even.
\end{proof}

\begin{remark}
Lemma~\ref{Lemma4} does not hold for non-binary delta-matroids. For example, let
$$D = (\{1,2,3\},\, \{\emptyset, \{1,2\}, \{1,3\}, \{2,3\}, \{1,2,3\}\}).$$
This is a non-binary delta-matroid (see \cite{BD1991}).
While every element of $E$ has primal type $u$ in $D$, the delta-matroid $D$ is odd.
\end{remark}

\begin{lemma}[\cite{CH2019}]
\label{lemma8}
Let $D=(E,\mathcal{F})$ be a delta-matroid and let $A,B\subseteq E$ with $A\cap B=\emptyset$. Then
\[
(D * A) - B = (D - B) * A.
\]
\end{lemma}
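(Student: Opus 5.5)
The statement in question is Lemma~\ref{lemma8}: for disjoint $A,B\subseteq E$, twisting by $A$ commutes with deleting $B$. I will prove it by induction on $|B|$, so the core is the case $B=\{b\}$ with $b\notin A$. Since deletions commute (Bouchet) and $D*A$ is again a delta-matroid, the inductive step is immediate once the single-element case is done:
\[
(D*A)-B=\bigl((D*A)-b\bigr)-(B-b)=\bigl((D-b)*A\bigr)-(B-b)=\bigl((D-b)-(B-b)\bigr)*A=(D-B)*A,
\]
where the third equality applies the induction hypothesis to the delta-matroid $D-b$.

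For a single element $b\notin A$, the first step is to observe that twisting by $A$ does not change membership of $b$: for every $X\in\mathcal{F}$ we have $b\in A\Delta X$ if and only if $b\in X$. Hence $b$ is a coloop of $D*A$ if and only if it is a coloop of $D$. This means the same branch of the deletion definition applies on both sides.

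\textbf{Non-coloop case.} Here
\[
\mathcal{F}((D*A)-b)=\{A\Delta X : X\in\mathcal{F},\ b\notin A\Delta X\}=\{A\Delta X: X\in\mathcal{F},\ b\notin X\}.
\]
This is exactly $\mathcal{F}((D-b)*A)$, where $A\subseteq E-b$.

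\textbf{Coloop case.} Here
\[
\mathcal{F}((D*A)-b)=\{(A\Delta X)-b : X\in\mathcal{F}\}=\{A\Delta(X-b): X\in\mathcal{F}\},
\]
using $b\notin A$. This is $\mathcal{F}((D-b)*A)$. In both cases the ground sets agree, namely $E-b$.

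There is no real obstacle. The only point needing care is that the deletion definition is case-based. This is handled by checking that coloop status of $b$ is twist-invariant when $b\notin A$. Loop status, which is not used in the definition, is likewise invariant. We also need that $D-b$ is a delta-matroid so that the induction applies, and this is quoted from \cite{BD1991}.
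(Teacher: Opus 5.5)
Your proof is correct and complete. The paper does not prove this lemma; it quotes it from \cite{CH2019}, so there is no competing argument to compare against. Your reduction to a single element $b\notin A$ is the routine verification behind the cited result. The key observation, that $b\in A\Delta X$ if and only if $b\in X$, ensures that the same branch of the case-based deletion definition applies to $D$ and to $D*A$. The two set identities then finish each case.

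Your inductive step already uses the following point implicitly, but it is worth stating. The single-element computation holds for arbitrary set systems. The delta-matroid hypothesis (for $D*A$ and $D-b$) is needed only so that $(D*A)-B$ and $D-B$ do not depend on the order of deletions. For general set systems the order can matter: take $\mathcal{F}=\{\{e\},\{f,g\}\}$ on $\{e,f,g\}$. Deleting $e$ then $f$ gives $\{\{g\}\}$, while deleting $f$ then $e$ gives $\{\emptyset\}$.
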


\begin{theorem} \label{theorem3}
Let $D=(E,\mathcal{F})$ be a binary delta-matroid. If the twist width set $\{\,w(D*A) \mid A \subseteq E\,\}$ contains two consecutive integers $k$ and $k+1$, then for every integer $m$ with $k+2 \le m \le w_M(D)$, $m\in \{\,w(D*A) \mid A \subseteq E\,\}$.
\end{theorem}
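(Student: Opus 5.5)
The plan is to walk downward from the maximum. Write $W=\{\,w(D*A)\mid A\subseteq E\,\}$. I would prove the following \emph{descent claim}: if $D'$ is a twist of $D$ with $w(D')=m$ and $k+2\le m\le w_M(D)$, then either $m-1\in W$ directly, or some twist of $D$ with width exactly $m$ has an element $e$ such that $w(D'*e)=m-1$. Table~\ref{tab:widths} says the latter means $e$ has type $tu$ or $ut$. Granting the claim, start from $A$ with $w(D*A)=w_M(D)$ and apply it repeatedly. This produces every value from $w_M(D)$ down to $k+1$. Together with $k,k+1\in W$, that gives the theorem.

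First I would extract the role of the hypothesis. Twisting preserves the parity of every $|X\Delta Y|$, so if $D$ were even then every twist would be even and every width would be even. Since $W$ contains $k$ and $k+1$, $D$ is odd, and hence so is every twist $D*A$. Next, for a twist $D'$ of width $m$, split by types using Table~\ref{tab:widths}. An element of type $tu$ or $ut$ gives $m-1$ immediately. An element of type $pt$ or $tp$ gives $m+1$, and an element of type $pp$ gives $m+2$. I would use these to move to a twist of the same or larger width, but only in a controlled way; the cleanest choice is to start at $w_M(D)$, where no element has type $pp$, $pt$ or $tp$. At width $m=w_M(D)$ every element therefore has type $uu$, $pu$, $up$, $tt$, $tu$ or $ut$. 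In the bad case, with no $tu$ or $ut$, the types are only $uu,pu,up,tt$.

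The main obstacle is to show that this bad case forces a width-$(m-1)$ twist somewhere, and then to make the descent continue below the maximum. For the bad case I would argue via Lemma~\ref{Lemma4}. Let $P$ be the set of elements of $D'$ with primal type $p$. By Lemma~\ref{lem:prim_delete}, applied one deletion at a time, the elements of $E-P$ keep their primal types in $D'-P$. If none of them has primal type $t$, then all have type $u$, and Lemma~\ref{Lemma4} makes $D'-P$ even.

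I would then try to lift evenness back to $D'$. The hope is that the $p$-elements here are of type $pu$, which behave like ``coloop-ish'' directions on the minimum side. Combined with Lemma~\ref{lemma8}, to commute the twist with deletion, this should contradict the oddness of $D'$. This is the step I am least sure of. So some element $e$ has primal type $t$, and in the bad case it is of type $tt$. Twisting by $e$ keeps the width, and by Lemma~\ref{corollary2} it keeps $\mathcal{F}_{\min}$. I would then examine the maximum side of $D'*e$, using binarity through the matrix $C$ of a normal twist, and look for an element that has become $tu$ or $ut$.

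For intermediate $m$, I would not restart from an arbitrary twist. Instead I would carry along the twist $D'$ obtained by the previous $tu$ or $ut$ step, and apply the same type analysis to it. Any element there of type $pp$, $pt$ or $tp$ raises the width back to at most $m+1$, which is already known to lie in $W$. So only the descending options need to be produced, and the same parity argument, with oddness forcing a primal-$t$ or dual-$t$ element, should supply them.
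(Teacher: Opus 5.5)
Your plan has a genuine gap, and it goes beyond the step you flag. The only thing you extract from the hypothesis $k,k+1\in W$ is that $D$ is odd. Everything afterwards, including ``oddness forcing a primal-$t$ or dual-$t$ element'' and the search for a $tu$ or $ut$ element after twisting a $tt$ element, is supposed to run on oddness plus binarity. That cannot work. Take $D(I_2)=(\{x,y\},2^{\{x,y\}})$: it is binary and odd, and every twist of it equals itself. Both elements have type $tt$ in every twist, and its twist width set is $\{2\}$. So the bad case at $w_M$ occurs with no width $w_M-1$ anywhere, and no element ever ``becomes'' $tu$ or $ut$. Oddness also does not force a $t$-element in general. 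The delta-matroid of $K_4$ embedded in the projective plane (the hemicube) is odd of width $1$, yet all its elements have type $pp$. Separately, a $pp$ element raises the width to $m+2$, not ``at most $m+1$''.

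More seriously, the one-step descent mechanism fails even when the hypothesis holds. Let $D$ be the direct sum of $D(I_2)$ on $\{x,y\}$, $(\{a,b\},\{\emptyset,\{a,b\}\})$ and $(\{1,2\},\{\emptyset,\{1\},\{1,2\}\})$. All three summands are binary and twist widths add, so $W=\{2\}+\{0,2\}+\{1,2\}=\{3,4,5,6\}$, with $k=3$ and $w_M(D)=6$. In every twist of width $5$, the third summand sits at width $1$, where its elements have type $tp$ or $pt$; meanwhile $a,b$ have type $uu$ and $x,y$ have type $tt$. Hence single twists from any width-$5$ twist give only $3$, $5$ or $6$, although $4\in W$ (twist $a$ in a width-$6$ twist). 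Your descent $6\to5$ via a $tu$ element therefore gets stuck, and no local type analysis at one twist can produce $m-1$.

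What is missing is a way to use a width-$k$ twist concretely. The paper argues upward by contradiction. If $k+2\notin W$, then $k+3\in W$. Take $B$ of minimum size with $w(D*A*B)=k+3$, where $w(D*A)=k$. Minimality, together with the missing value $k+2$, forces every element of $B$ to be of type $uu$ in $D'=D*A*B$. Lemmas~\ref{lem:prim_delete}, \ref{Lemma4} and~\ref{lemma8} then make the minors $D'-B^c$ and ${D'}^*-B^c$ even. Consequently each step of twisting $B$ back changes the width by an even amount, contradicting the net change $-3$. Evenness is needed only on the minor supported by the minimal set $B$. That is why the global oddness of $D'$ is no obstruction there, and why trying to contradict the oddness of $D'$ itself cannot succeed.
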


\begin{proof}
Since $k\in \{\,w(D*A) \mid A \subseteq E\,\}$, there exists $A \subseteq E$ such that $w(D * A) = k$. If $w_M(D) \le k + 1$, there is nothing to prove.
Thus we assume $w_M(D) \ge k + 2$. To complete the proof, it suffices to show that there exists $A' \subseteq E$ such that $w(D * A') = k + 2$.
Note that once such an $A'$ is obtained, the pair $(k+1, k+2)$ consists of two consecutive twist widths, and by iterating the same argument we obtain every integer $m$ with $k+2 \le m \le w_M(D)$.

If there exists $F \subseteq E$ with $w(D * A * F) = k+2$, the proof is complete.
Suppose that $k+2$ is not a twist width of $D * A$.
Since each twist by a single element changes the width by at most $2$ (Table~\ref{tab:widths}) and $k+2$ is forbidden, the set of twist widths cannot skip two consecutive integers.
As $w_M(D) \ge k+2$, it follows that $k+3$ must be a twist width of $D * A$.
Thus there exists $Y \subseteq E$ with $w(D * A * Y) = k+3$.

Now choose $B = \{e_1, e_2, \dots, e_n\}$ to be a set of minimum cardinality among all sets with
$w(D * A * B) = k+3$.
For any $e_x \in B$, consider the delta-matroid $D * A * (B - e_x)$,
which is $D * A * B * e_x$.
By Table~\ref{tab:widths}, the effect on the width depends on the type of $e_x$ in $D * A * B$.

\begin{itemize}
\item If $e_x$ is of type $tt$, $pu$, or $up$ in $D * A * B$, then
$$w(D * A * (B - e_x)) = w(D * A * B) = k+3,$$
contradicting the minimality of $|B|$.

\item If $e_x$ is of type $pp$, $pt$, or $tp$ in $D * A * B$, then twisting $e_x$ increases the width.
More precisely, $$w(D * A * (B - e_x)) = w(D * A * B * e_x) = k+3 + \delta,$$
where $\delta = 2$ for $pp$, and $\delta = 1$ for $pt$ or $tp$.  Hence $k+3 + \delta \ge k+4$.
Now start from $D * A$ (width $k$) and twist the elements of $B - e_x$ in any order.
By Table~\ref{tab:widths} each step changes the width by at most $2$, and the width $k+2$ never occurs.
Consequently, before reaching a width of at least $k+4$, the sequence must at some point attain $k+3$;
otherwise, it would have to jump from a value at most $k+1$ directly to a value at least $k+4$, a change of at least $3$, which is impossible.
Thus some proper subset of $B$ yields width $k+3$, contradicting the minimality of $|B|$.
\end{itemize}
Hence every element $e_x \in B$ must be
of type $uu$, $ut$, or $tu$ in $D * A * B$. If some $e_x \in B$ is of type $ut$ or $tu$ in $D * A * B$, then by Table~\ref{tab:widths}
\[
w(D * A * B * e_x) = w(D * A * B) - 1 = k+2,
\]
contradicting the assumption that $k+2$ is not a twist width of $D * A$.
Therefore every element $e_x \in B$ must be of type $uu$ in $D * A * B$.

Let $D' = D * A * B$.
Delete all elements not in $B$ to obtain $D' - B^c$.
Since every $e \in B$ is of type $uu$ in $D'$, the primal type of $e$ in $D'$ is $u$.
By Lemma~\ref{lem:prim_delete}, deleting $B^c$ does not change the primal type of any element of $B$.
Hence in $D' - B^c$, every element of $B$ still has primal type $u$.
Because $D'$ is a twist of the binary delta-matroid $D$ and deletion preserves binaryness,
$D' - B^c$ is a binary delta-matroid. Lemma~\ref{Lemma4} then implies that $D' - B^c$ is even.

Now fix an order $e_1, e_2, \dots, e_n$ of the elements of $B$.
For $m \in \{1,\dots,n\}$ set $F_m = \{e_1,\dots,e_{m-1}\}$, with the convention $F_1 = \emptyset$.
We determine the type of $e_m$ in $D' * F_m$. By Lemma~\ref{lemma8}, $$(D' * F_m) - B^c = (D' - B^c) * F_m.$$
By Lemma~\ref{lem:prim_delete}, deleting $B^c$ does not alter the primal type of any element of $B$. Hence the primal type of $e_m$ in $D' * F_m$ coincides with its primal type in $(D' - B^c) * F_m$.
Since $D' - B^c$ is even and twisting preserves evenness,  it follows that $(D' - B^c) * F_m$ is even. In an even delta-matroid no element can have primal type $t$. Thus the primal type of $e_m$ in $(D' - B^c) * F_m$ is either $p$ or $u$. Consequently, the primal type of $e_m$ in $D' * F_m$ is also $p$ or $u$.

A completely symmetric argument applied to the dual ${D'}^* = D' * E$ shows that the dual type of $e_m$ in $D' * F_m$ is also $p$ or $u$. Consequently, in each intermediate delta-matroid $D' * F_m$, the element $e_m$ can only be of type $uu$, $up$, $pu$ or $pp$.

Now start from $D'$ (width $k+3$) and twist the elements of $B$ again in the same order
$e_1, e_2, \dots, e_n$. For $m \in \{1,\dots,n\}$, when we twist $e_m$, the current delta-matroid
is $D' * F_m$. Note that the type of $e_m$ in $D' * F_m$ is $uu$, $up$, $pu$, or $pp$.
Thus, by Table~\ref{tab:widths}, twisting $e_m$ changes the width by an even number:
$-2$ for $uu$, $0$ for $up$ and $pu$, and $+2$ for $pp$.
After $n$ such twists we obtain $D' * B = D * A$, whose width is $k$.
Therefore the total change in width from $D'$ to $D * A$ is $k - (k+3) = -3$, an odd number,
which is impossible because it is a sum of even numbers.
This contradiction shows that our assumption that no set yields width $k+2$ is false.
Hence there exists $A' \subseteq E$ with $w(D *A') = k+2$, completing the proof.
\end{proof}

\begin{proof}[{\bf Proof of Theorem~\ref{theorem5}}.]
By the definition of the twist polynomial, the set of degrees with non-zero coefficient is precisely the set of twist widths
\[
\mathcal{W}=\{\,w(D*A)\mid A\subseteq E\,\}.
\]
Arrange the elements of $\mathcal{W}$ in increasing order as $w_1<w_2<\dots<w_t$.
A single twist changes the width by at most $2$ (Table~\ref{tab:widths}), so we must have
\[
w_{i+1}-w_i\le 2\qquad\text{for all } i\in\{1,\dots,t-1\}.
\]

If all $w_i$ ($i=1,\dots,t$) have the same parity, then $\mathcal{W}$ contains no integer of the opposite parity.
Since $w_{i+1}-w_i\le 2$ and all numbers share the same parity, the sequence consists of consecutive odd (or even) integers. Hence the twist polynomial is an odd polynomial (if all $w_i$ are odd) or an even polynomial (if all $w_i$ are even).

Now suppose $\mathcal{W}$ contains both odd and even integers.
Without loss of generality, assume $w_1$ is odd. Let $w_k$ be the first even integer in the list.
Since $w_{k-1}$ is odd and $w_k$ is even, their difference is an odd number. Combined with $w_k-w_{k-1}\le 2$, we have
\[
w_k-w_{k-1}=1.
\]
Thus $w_{k-1}$ and $w_{k-1}+1$ are two consecutive twist widths.
By Theorem~\ref{theorem3}, for every integer $m$ with $w_{k-1}+2\le m\le w_M(D)$, we have $m\in \mathcal{W}$.
Consequently, all integers $m \ge w_{k-1}$ belong to $\mathcal{W}$. This implies that the even part and the odd part of the twist polynomial both have no gaps, so the polynomial is both even-interpolating and odd-interpolating.
\end{proof}

\begin{lemma} [\cite{AB4}]\label{lem 4}
Every ribbon-graphic delta-matroid is a binary delta-matroid.
\end{lemma}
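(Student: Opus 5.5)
The statement to prove is Lemma~\ref{lem 4}: every ribbon-graphic delta-matroid is binary. It is cited from \cite{AB4}, but here is the argument I would give. Let $G$ be a ribbon graph. I would first reduce to the connected case. The feasible sets of $D(G)$ are unions of spanning quasi-trees of the components, so $D(G)$ is the direct sum of the delta-matroids of the components. A direct sum of binary delta-matroids is binary: take the twists of the summands, twist by their union, and use the block-diagonal matrix.

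Next I would normalize. Choose a spanning tree $T$ of $G$; a plane tree has one boundary component, so $E(T)\in\mathcal{F}(D(G))$. Then $D(G)*E(T)$ is normal, since $\emptyset=E(T)\Delta E(T)$ is feasible. Moreover, Chmutov's duality relation gives $D(G)*A=D(G^{A})$. So $D(G)*E(T)=D(G^{E(T)})$, and $G^{E(T)}$ is a bouquet, i.e. a one-vertex ribbon graph. It therefore suffices to show that $D(B)=D(C)$ for every bouquet $B$, where $C$ is a suitable symmetric $\mathrm{GF}(2)$ matrix.

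For a bouquet $B$ with loops $E$, I would read off the signed chord diagram at the vertex and define $C$ as follows:
\begin{itemize}
\item set $C_{ee}=1$ iff the loop $e$ is non-orientable (twisted);
\item set $C_{ef}=1$ for $e\neq f$ iff the loops $e$ and $f$ are interlaced.
\end{itemize}
The claim is that for each $A\subseteq E$, the ribbon subgraph $(V,A)$ has exactly one boundary component iff $C[A]$ is nonsingular over $\mathrm{GF}(2)$. This is the classical circle-graph fact in the form of Cohn--Lempel, Moran and Bouchet: the number of boundary components of a one-vertex ribbon graph with edge set $A$ equals $\operatorname{corank}_{\mathrm{GF}(2)}(C[A])+1$.

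The main obstacle is proving that corank formula. I would argue by induction on $|A|$, removing one loop at a time and tracking how faces merge or split. Deleting an untwisted loop $e$ changes the face count by $\pm1$, and one checks that this matches the corank change under deletion of row and column $e$. This is done by pivoting on a nonzero interlacing entry, i.e. a local complementation of the interlace matrix, which corresponds to partial duality in a single loop. Twisted loops are handled the same way, using the diagonal entry as the pivot. Given how technical this is, I would cite the result as a known theorem rather than redo it. Once the formula holds, $\mathcal{F}(D(B))=\mathcal{F}(D(C))$. Hence $D(G)$ is a twist of $D(C)$, which is exactly the definition of binary.
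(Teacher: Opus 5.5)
Your proposal is correct. It is not the paper's route, because the paper gives no argument here: the lemma is quoted from Bouchet and used as a black box. You instead unpack that citation into the standard two-step argument.

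\begin{itemize}
\item First you twist by the edge set of a spanning tree, which is feasible. Then $D(G)*E(T)=D(G^{E(T)})$ is the normal delta-matroid of a bouquet.
\item Second, you identify this with $D(C)$ for the signed interlace matrix, via the nullity formula $f(B[A])=\operatorname{corank}(C[A])+1$.
\end{itemize}

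Your reduction steps are all sound: direct sums, normalization, $G^{E(T)}$ being a bouquet, and the closing appeal to the definition of binary.

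What this route buys is an explicit representing matrix, and your choice of $C$ is in fact forced. By the paper's uniqueness statement for normal binary delta-matroids, rules (1)--(2) must produce $C$ from the feasible sets of size at most two. A direct boundary count on one- and two-loop bouquets shows that these rules give $C_{ee}=1$ exactly for twisted loops and $C_{uv}=1$ exactly for interlaced pairs. What it does not buy is self-containedness. All of the lemma's content now sits in the cited nullity formula rather than in Bouchet's theorem, so your proof rests on a citation just as the paper's does.

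One caution about the induction you sketch for that formula: do not present it as a proof, because its key step is false for non-orientable bouquets. Deleting an untwisted loop need not change the number of boundary components by $\pm1$. Take a twisted loop $a$ interlaced with an untwisted loop $b$. Then $B[\{a\}]$ (a M\"obius band) and $B[\{a,b\}]$ both have exactly one boundary component. This agrees with $C[\{a\}]=(1)$ and $C[\{a,b\}]=\begin{pmatrix}1&1\\ 1&0\end{pmatrix}$ both being nonsingular. The effect of adding a loop depends on whether its ends lie on one boundary component of the current subgraph, and on whether it is twisted relative to that component rather than relative to the vertex.

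Your pivot remark also conflates two operations:
\begin{itemize}
\item Local complementation at a looped vertex corresponds to the partial dual in a single twisted loop.
\item An untwisted loop needs a pivot on an interlaced pair $\{u,v\}$, which matches $B^{\{u,v\}}$. The partial dual in a single untwisted loop has two vertices and is no longer a bouquet.
\end{itemize}

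Since you explicitly fall back on citing the theorem, none of this breaks your proof. A self-contained version would need a correct case analysis built around these pivots rather than the $\pm1$ split.
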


\begin{lemma}[\cite{CH2019}]\label{lem 1}
Let $G$ be a ribbon graph and $A\subseteq E(G)$. Then $D(G^{A})=D(G)*A$ and $\varepsilon(G)=w(D(G))$.
\end{lemma}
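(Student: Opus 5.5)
The statement just above is Lemma~\ref{lem 1}, which has two parts. The first is $D(G^{A})=D(G)*A$. The second is $\varepsilon(G)=w(D(G))$.

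For the first part I would reduce to a single edge. Partial duality is additive in the sense that $(G^{A})^{B}=G^{A\Delta B}$, and twisting satisfies $(D*A)*B=D*(A\Delta B)$. So it suffices to show that $D(G^{e})=D(G)*e$ for one edge $e$. Concretely, I would show that for every $F\subseteq E$, the spanning subgraph $(V(G^{e}),F)$ of $G^{e}$ has exactly one boundary component per connected component of $G^{e}$ if and only if $(V(G),F\Delta e)$ has the same property in $G$. The route is the standard fact that the boundary components of the spanning subgraph $(V,F)$ of $G^{A}$ coincide, as curves on the surface, with the boundary components of the spanning subgraph $(V,F\Delta A)$ of $G$. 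This holds because both are traced by the same arcs along the vertex-disc and edge-disc boundaries: taking the partial dual at $A$ swaps which sides of the edges in $A$ are "vertex" sides. Partial duality also preserves connected components, and spanning quasi-trees are taken componentwise. Together these give that $F$ is feasible in $D(G^{A})$ if and only if $F\Delta A$ is feasible in $D(G)$, which is exactly $D(G)*A$.

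For the second part I would use Euler characteristic counting, working componentwise so that I may assume $G$ is connected. For a spanning subgraph $H=(V,F)$, the value $\varepsilon(H)=2c(H)-v+|F|-f(H)$ satisfies $\varepsilon(H)\le\varepsilon(G)$, since a subsurface cannot have larger Euler genus. For a spanning quasi-tree we have $f=c=1$, so $|F|=v-1+\varepsilon(H)$. Minimum-size quasi-trees are spanning trees, which are genus $0$ and have $v-1$ edges; this gives $r(D_{\min})=v-1$. For the maximum, I would show that some spanning quasi-tree has full Euler genus $\varepsilon(G)$. To do this, I would take a spanning tree and add the remaining edges greedily while keeping a single boundary component, as far as possible. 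Each excess edge either is addable or joins two points of one boundary component. In the latter case a count shows the leftover edges contribute faces rather than genus, so the quasi-tree $Q$ satisfies $\varepsilon(Q)=\varepsilon(G)$. This is the quasi-tree rank result of Chun et al. Then $r(D_{\max})=v-1+\varepsilon(G)$, and subtracting the two ranks gives $w=\varepsilon(G)$.

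The main obstacle is the maximal quasi-tree genus claim. I would cite it from \cite{CH2019} rather than reprove it, and similarly cite the boundary-component identity for partial duals.
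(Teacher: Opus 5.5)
Your first half is correct and is the standard argument; the paper gives no proof of this lemma and only cites it. For $e\in A$ the ends and sides of $e$ swap roles in $G^{A}$, so the boundaries of $(V(G^{A}),F)$ and of $(V(G),F\Delta A)$ are built from the same arcs. Together with $c(G^{A})=c(G)$ this gives $D(G^{A})=D(G)*A$. Your computation $r(D(G)_{\min})=v(G)-c(G)$ is also correct.

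\textbf{The gap is in the maximum.} The greedy procedure you describe does not in general reach a quasi-tree of genus $\varepsilon(G)$, and the ``count'' that leftover edges only contribute faces is false. Take one vertex with two interlaced untwisted loops $a,b$; this is a torus, so $\varepsilon(G)=2$. Start from the spanning tree $\emptyset$. Adding either loop alone gives an annulus with two boundary components, so your procedure stops at $Q=\emptyset$ with $\varepsilon(Q)=0$. Yet $\{a,b\}$ is a spanning quasi-tree. The two leftover edges together create genus, not faces: $f(G)=1$, not $3$. In delta-matroid terms, a non-maximum feasible set may only be enlargeable by two elements at once (the case $v\neq u$ in the Symmetric Exchange Axiom). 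So being stuck under single-edge additions is not maximality. The fact you want is true and can be cited, but your sketch does not prove it.

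\textbf{You do not need that fact.} Apply your first half with $A=E$. Since $G^{E}=G^{*}$, you get $D(G^{*})=D(G)*E$, whose feasible sets are the complements $E-X$ of the feasible sets $X$ of $D(G)$. Hence $F\in\mathcal{F}_{\max}(D(G))$ if and only if $E-F\in\mathcal{F}_{\min}(D(G^{*}))$. By the same minimum argument you already gave, this means $E-F$ is a union of spanning trees of the components of $G^{*}$. Using $v(G^{*})=f(G)$ and $c(G^{*})=c(G)$, this yields $r(D(G)_{\max})=e(G)-f(G)+c(G)$. Subtracting $r(D(G)_{\min})=v(G)-c(G)$ gives $w(D(G))=2c(G)-v(G)+e(G)-f(G)=\varepsilon(G)$ directly, with no statement about maximal quasi-trees.
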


\begin{corollary}
\label{cor:ribbon}
Let $G$ be a ribbon graph. If the set $\{\,\varepsilon(G^A)\mid A\subseteq E(G)\,\}$ contains two consecutive integers $k$ and $k+1$, then for every integer $m$ with $k+2\le m\le \varepsilon_M(G)$, we have $m\in \{\,\varepsilon(G^A)\mid A\subseteq E(G)\,\}$.
\end{corollary}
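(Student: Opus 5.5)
The corollary is a direct translation of Theorem~\ref{theorem3} into the language of ribbon graphs. We pass to the delta-matroid $D=D(G)$ and identify the two sets of integers involved.

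First, by Lemma~\ref{lem 4}, $D(G)$ is a binary delta-matroid, so Theorem~\ref{theorem3} applies to it. Next, for every $A\subseteq E(G)$, Lemma~\ref{lem 1} gives $D(G^A)=D(G)*A$. Applying the second part of Lemma~\ref{lem 1} to the ribbon graph $G^A$ then yields
\[
\varepsilon(G^A)=w(D(G^A))=w(D(G)*A).
\]
Since the ground sets agree, $E(D(G))=E(G)$, this gives the equality of sets
\[
\{\,\varepsilon(G^A)\mid A\subseteq E(G)\,\}=\{\,w(D(G)*A)\mid A\subseteq E(G)\,\}.
\]
Taking maxima on both sides gives $\varepsilon_M(G)=w_M(D(G))$.

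Now suppose the set of partial-dual Euler genera contains $k$ and $k+1$. Then the twist width set of $D(G)$ contains $k$ and $k+1$. By Theorem~\ref{theorem3}, every integer $m$ with $k+2\le m\le w_M(D(G))=\varepsilon_M(G)$ is a twist width $w(D(G)*A)$ for some $A$. By the identification above, each such $m$ equals $\varepsilon(G^A)$ for that same $A$, which proves the corollary.

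There is no real obstacle here. The only point needing care is that Lemma~\ref{lem 1} is used twice: once to move the twist inside $D(\cdot)$, and once, applied to $G^A$ rather than $G$, to convert width into Euler genus. One should also check that no connectivity hypothesis is needed. This holds because the definitions of spanning quasi-tree and of $\varepsilon$ both handle disconnected ribbon graphs componentwise, so Lemma~\ref{lem 1} holds as stated.
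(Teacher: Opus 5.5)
Your proposal is correct and follows essentially the same route as the paper: you use Lemma~\ref{lem 4} to get that $D(G)$ is binary, use Lemma~\ref{lem 1} to identify $\{\varepsilon(G^A)\}$ with the twist width set of $D(G)$, and then apply Theorem~\ref{theorem3}. Your explicit remark that $\varepsilon_M(G)=w_M(D(G))$ makes precise a step the paper leaves implicit.
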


\begin{proof}
By Lemma~\ref{lem 4}, $D(G)$ is binary.
For any $A\subseteq E(G)$,  $\varepsilon(G^A)=w(D(G)*A)$ by Lemma~\ref{lem 1}.
Thus $$\{\varepsilon(G^A)\mid A\subseteq E(G)\} = \{w(D(G)*A)\mid A\subseteq E(G)\}.$$
The conclusion follows directly from Theorem~\ref{theorem3} applied to $D(G)$.
\end{proof}

Applying the same parity argument as in the proof of Theorem~\ref{theorem5} to the set $\{\varepsilon(G^A)\mid A\subseteq E(G)\}$, together with Corollary~\ref{cor:ribbon}, we obtain Theorem \ref{main theorem}.

\section*{Acknowledgements}
This work is supported by NSFC (No. 12471326).

\end{document}